\let\epsilon\varepsilon
\let\phi\varphi
\let\theta\vartheta
\DeclareMathOperator{\realdel}{Re} \renewcommand{\Re}{\realdel}
\DeclareMathOperator{\imaginaerdel}{Im}
\DeclareMathOperator{\supp}{supp} \renewcommand{\Im}{\imaginaerdel}
\providecommand{\C}{\mathbb{C}} \providecommand{\N}{\mathbb{N}}
\providecommand{\R}{\mathbb{R}} \providecommand{\cH}{\mathcal{H}}
\providecommand{\cD}{\mathcal{D}} \providecommand{\cB}{\mathcal{B}}
\newtheorem{theorem}{Theorem}
\newtheorem{lemma}[theorem]{Lemma}
\newtheorem{proposition}[theorem]{Proposition}
\theoremstyle{definition}
\newtheorem{definition}[theorem]{Definition}
\newtheorem{remark}[theorem]{Remark}
\providecommand{\kdi}{\delta_i}
\providecommand{\kdj}{\delta_j}
\providecommand{\kdl}{\delta_\ell}
\DeclareMathOperator{\adjungeret}{ad}
\providecommand{\ad}[2]{\adjungeret_{#1}^{#2}}
\providecommand{\abs}[2][]{#1\lvert#2#1\rvert}
\providecommand{\norm}[2][]{#1\lVert#2#1\rVert}
\providecommand{\jnorm}[2][]{#1\langle#2#1\rangle}
\begin{document}
\title{A Taylor-like Expansion of a Commutator with a Function of
  Self-adjoint, Pairwise Commuting Operators}
\author{Morten Grud Rasmussen\\Department of Mathematical Sciences\\
  Aarhus University\\
  DK-8000 Aarhus\\
  Denmark\\
  \emph{email:} \texttt{mgr@imf.au.dk}}




\date{} 
\maketitle

\begin{abstract}Let $A$ be a $\nu$-vector of self-adjoint, pairwise
  commuting operators and $B$ a bounded operator of class
  $C^{n_0}(A)$. We prove a Taylor-like expansion of the commutator
  $[B,f(A)]$ for a large class of functions
  $f\colon\mathbb{R}^\nu\to\mathbb{R}$, generalising the
  one-dimensional result where $A$ is just a self-adjoint
  operator. This is done using almost analytic extensions and the
  higher-dimensional Helffer-Sj\"o{}strand formula.
  
  \medskip\noindent
  \emph{Keywords}: Commutator expansions, functional calculus, almost analytic
  extensions, Helffer-Sj\"ostrand formula.

  \smallskip\noindent
  \emph{Mathematical Subject Classification (2010)}:
  47B47

\end{abstract}
\newpage
\section{Introduction}
It is well-known that if $A$ is a self-adjoint operator, $B$ is a
bounded operator of class $C^{n_0}(A)$ in the sense of \cite{ABG} and
$f$ satisfies $\abs{f^{(n)}(x)}\le C_n\jnorm{x}^{s-n}$ for all $n$,
then for $0\le t_1\le n_0$, $0\le t_2\le 1$ with $s+t_1+t_2<n_0$,
\begin{align*}
  [B,f(A)]=\sum_{k=1}^{n_0-1}\frac{1}{k!}f^{(k)}(A)\ad{A}{k}(B)+R_{n_0}(A,B)
\end{align*}
where $\ad{A}{k}(B)$ is the $k$'th iterated commutator,
$R_{n_0}(A,B)\in\cB(\cH_A^{-t_2};\cH_A^{t_1})$ and $\cH_A^t$ is
defined as $\cD(\jnorm{A}^t)$ equipped with the graph-norm
$\norm{v}_t=\norm{\jnorm{A}^tv}$ for $t\ge0$ and $\cH_A^{-t}$ is the
dual space of $\cH_A^t$. This follows relatively easily from using the
(one-dimensional) Helffer-Sj\"o{}strand formula
\begin{align}\label{eq:HS}
  f(A)=\frac{1}{\pi}\int_{\C}\bar\partial\tilde f(z)(A-z)^{-1}dz,
\end{align}
where $\bar\partial=\frac{1}{2}(\partial_x+i\partial_y)$ and $\tilde
f$ is an almost analytic extension of $f$, and the identity
\begin{align*}
  [B,f(A)] ={} &\sum_{k=1}^{n_0-1}\frac{1}{k!}
  \frac{k!}{\pi}\int_\C\bar\partial\tilde
  f(z)(-1)^k(A-z)^{-k-1}dz\\
  &+\frac{(-1)^{n_0}}{\pi}\int_\C\bar\partial\tilde
  f(z)(A-z)^{-n_0}\ad{A}{n_0}(B)(A-z)^{-1}dz
\end{align*}
when $\frac{k!}{\pi}\int_\C\bar\partial\tilde
f(z)(-1)^k(A-z)^{-k-1}dz$ is recognised as $f^{(k)}(A)$ using
\eqref{eq:HS}.  Such commutator expansions where first proved in
\cite{SiSo}. See e.g.\ \cite{MRMP1} for details. Due to the higher
complexity of the general Helffer-Sj\"o{}strand formula, these
calculations do not lead directly to the generalised result where $A$
is a vector of self-adjoint, pairwise commuting operators. However, we
will follow the same idea.

The theorem may be viewed as an abstract analogue of
pseudo-differential calculus. The one-dimensional version is an often
used result, see e.g. \cite{DGeBook} and \cite{MRMP1}. Apart from the
obvious interest in generalising the result to higher dimensions, our
improvement has proven useful in the treatment of models in quantum
field theory, see \cite{MGRJSM}. In particular, a lemma in
\cite{MGRJSM} whose proof depends on our result, extends the results
of \cite{MAHP} to a larger class of models.
\section{The setting and result}
In the following, $A=(A_1,\dotsc,A_\nu)$ is a vector of self-adjoint,
pairwise commuting operators acting on a Hilbert space $\cH$, and
$B\in\cB(\cH)$ is a bounded operator on $\cH$. We shall use the notion
of $B$ being of class $C^{n_0}(A)$ introduced in \cite{ABG}. For
notational convenience, we adobt the following convention: If $0\le
j\le\nu$, then $\kdj$ denotes the multi-index
$(0,\dotsc,0,1,0,\dotsc,0)$, where the $1$ is in the $j$'th entry.
\begin{definition}
  Let $n_0\in\N\cup\{\infty\}$. Assume that the mul\-ti-com\-mu\-ta\-tor form
  defined iteratively by $\ad{A}{0}(B)=B$ and
  $\ad{A}{\alpha}(B)=[\ad{A}{\alpha-\kdj}(B),A_j]$ as a form on
  $\cD(A_j)$, where $\alpha\ge\kdj$ is a multi-index and $1\le
  j\le\nu$, can be represented by a bounded operator also denoted by
  $\ad{A}{\alpha}(B)$, for all multi-indices $\alpha$,
  $\abs{\alpha}<n_0+1$. Then $B$ is said to be of class $C^{n_0}(A)$
  and we write $B\in C^{n_0}(A)$. 
\end{definition}
\begin{remark}
  The definition of $\ad{A}{\alpha}(B)$ does not depend on the order
  of the iteration since the $A_j$ are pairwise commuting. We call
  $\abs\alpha$ the \emph{degree} of $\ad{A}{\alpha}(B)$.
\end{remark}
In the following, $\cH_A^s := D(\abs{H}^s)$ for $s\ge0$ will be used
to denote the scale of spaces associated to $A$. For negative $s$, we
define $\cH_A^s := {(\cH_A^{-s})}^*$.

\begin{theorem}\label{thm}
  Assume that $B\in C^{n_0}(A)$ for some $n_0\ge n+1\ge 1$, $0\le t_1\le n+1$, $0\le
t_2\le 1$ and that $\{f_\lambda\}_{\lambda\in I}$
  satisfies 
  \begin{equation*}
    \label{eq:AAE}
    \forall\alpha\,\exists C_\alpha\colon\abs{\partial^\alpha
      f_\lambda(x)}\le C_\alpha\jnorm{x}^{s-\abs\alpha}
  \end{equation*}
  uniformly in $\lambda$ for some $s\in\R$ such that
  $t_1+t_2+s<n+1$. Then
  \begin{align*}
    [B,f_\lambda(A)]=\sum_{\abs\alpha=1}^n\frac{1}{\alpha!}\partial^\alpha
    f_\lambda(A)\,\ad{A}{\alpha}(B)+R_{\lambda,n}(A,B)
  \end{align*}
  as an identity on $\cD(\jnorm{A}^s)$, where
  $R_{\lambda,n}(A,B)\in\cB(\cH_A^{-t_2},\cH_A^{t_1})$ and there exist
  a constant $C$ independent of $A$, $B$ and $\lambda$ such that
  \begin{equation*}
    \norm{R_{\lambda,n}(A,B)}_{\cB(\cH_A^{-t_2},\cH_A^{t_1})}\le
    C\smashoperator{\sum_{\abs{\alpha}=n+1}}\norm{\ad{A}{\alpha}(B)}.
\end{equation*}
\end{theorem}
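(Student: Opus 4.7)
The plan is to imitate the one-dimensional argument of the introduction, but using the $\nu$-dimensional Helffer-Sj\"ostrand formula. To each $f_\lambda$ I would attach an almost analytic extension $\tilde f_\lambda\colon\C^\nu\to\C$ with $\tilde f_\lambda|_{\R^\nu}=f_\lambda$ satisfying, uniformly in $\lambda$, bounds of the form $\abs{\bar\partial_1\cdots\bar\partial_\nu\tilde f_\lambda(z)}\le C_N\jnorm{\Re z}^{s-\nu-N}\prod_j\abs{\Im z_j}^N$ for every $N\in\N$; this is possible by the decay hypothesis on $\partial^\alpha f_\lambda$ via the standard Borel-type construction. The $\nu$-dimensional Helffer-Sj\"ostrand formula then represents
\[
f_\lambda(A)=\frac{(-1)^\nu}{\pi^\nu}\int_{\C^\nu}(\bar\partial_1\cdots\bar\partial_\nu\tilde f_\lambda)(z)\prod_{j=1}^\nu R_j(z)\,dL(z),
\]
where $R_j(z)=(A_j-z_j)^{-1}$ and $dL$ is Lebesgue measure on $\C^\nu$. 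The product of resolvents is unambiguous because the $A_j$ commute, and so $B$ may be commuted under the integral, reducing the problem to expanding $[B,\prod_j R_j(z)]$.

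For the expansion I would use the elementary identity $\ad{A}{\beta}(B)R_j=R_j\ad{A}{\beta}(B)-R_j\ad{A}{\beta+\kdj}(B)R_j$ (from $[C,R_j]=R_j[A_j,C]R_j$), iterated in $j=1$, then $j=2$, and so on, up to total commutation order $n+1$. A straightforward induction produces
\[
B\prod_j R_j=\sum_{\abs{\alpha}\le n}(-1)^{\abs{\alpha}}\prod_j R_j^{\alpha_j+1}\,\ad{A}{\alpha}(B)+\sum_{\abs{\gamma}=n+1}T_\gamma(z),
\]
where each $T_\gamma(z)$ is a finite sum of terms of the form $\prod_j R_j^{p_j}\,\ad{A}{\gamma}(B)\,\prod_j R_j^{q_j}$ with $\sum_j(p_j+q_j)=\nu+n+1$. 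Subtracting $(\prod_j R_j)B$ from both sides and inserting into the HS formula splits $[B,f_\lambda(A)]$ into main terms and a remainder. Each main term is identified with $(\alpha!)^{-1}\partial^\alpha f_\lambda(A)\ad{A}{\alpha}(B)$ by differentiating the HS formula under the integral sign and using $\partial_{z_j}R_j^k=kR_j^{k+1}$, which yields
\[
\partial^\alpha f_\lambda(A)=\frac{(-1)^{\nu+\abs{\alpha}}\alpha!}{\pi^\nu}\int_{\C^\nu}(\bar\partial_1\cdots\bar\partial_\nu\tilde f_\lambda)(z)\prod_j R_j^{\alpha_j+1}\,dL(z);
\]
the signs cancel against the $(-1)^{\abs{\alpha}}$ in the expansion.

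For the remainder $R_{\lambda,n}(A,B)$, I would bound the $\cB(\cH_A^{-t_2},\cH_A^{t_1})$-norm of each summand by applying the interpolation estimate $\norm{\jnorm{A}^t\prod_j R_j^{p_j}}\le C\jnorm{\Re z}^t\prod_j\abs{\Im z_j}^{-p_j}$ (valid for $0\le t\le\sum_jp_j$) on the left with $t=t_1$ and on the right with $t=t_2$ of $\ad{A}{\gamma}(B)$; this is feasible since the hypotheses $t_1\le n+1$, $t_2\le 1$ leave enough resolvents on each side. Combined with the almost analytic decay, the resulting scalar integrand is controlled by $\jnorm{\Re z}^{t_1+t_2+s-\nu-N}\prod_j\abs{\Im z_j}^{N-p_j-q_j}$, and the condition $t_1+t_2+s<n+1$ together with $N$ chosen sufficiently large makes it converge, uniformly in $\lambda$, to a quantity bounded by a constant times $\sum_{\abs{\gamma}=n+1}\norm{\ad{A}{\gamma}(B)}$. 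The main difficulty I anticipate is the second step: organising the iterated commutator expansion so that (i) the coefficient of every main term matches exactly the one produced by the HS derivative formula, and (ii) every remainder summand retains enough resolvents on both sides of its degree-$(n+1)$ multi-commutator to accommodate $t_1$ and $t_2$ in the final norm estimate.
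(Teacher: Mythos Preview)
Your strategy diverges from the paper's in one essential place: you use the tensor-product Helffer--Sj\"ostrand representation with the kernel $\prod_{j=1}^\nu (A_j-z_j)^{-1}$, whereas the paper uses the version with kernel $(A_\ell-\bar z_\ell)\abs{A-z}^{-2\nu}$. This is not merely cosmetic; it is exactly where your proposal breaks. In the remainder terms of your expansion, the resolvents are distributed unevenly among the directions: already for $\nu=2$, $n=1$ one obtains, for instance, $R_1^2\,\ad{A}{2\delta_1}(B)\,R_1R_2$ (no $R_2$ on the left) and $R_1R_2^2\,\ad{A}{2\delta_2}(B)\,R_2$ (no $R_1$ on the right). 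Your interpolation estimate $\norm{\jnorm{A}^{t}\prod_j R_j^{p_j}}\le C\jnorm{\Re z}^{t}\prod_j\abs{\Im z_j}^{-p_j}$ is simply false whenever some $p_j=0$ and $t>0$, because $\prod_j R_j^{p_j}$ then provides no control over $A_j$ and $\jnorm{A}^{t}\prod_j R_j^{p_j}$ is unbounded. Commuting the missing resolvent across $\ad{A}{\gamma}(B)$ would raise $\abs{\gamma}$ above $n+1$, exceeding the $C^{n_0}$ assumption. So point (ii) in your final paragraph is not a bookkeeping difficulty but an actual obstruction; with only $n_0=n+1$ commutators at your disposal, the product-resolvent kernel cannot produce a remainder in $\cB(\cH_A^{-t_2},\cH_A^{t_1})$ for $t_1,t_2>0$.

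The paper's remedy is precisely to replace the product $\prod_j R_j$ by powers of the \emph{joint} resolvent $g(A)=\abs{A-z}^{-2}$. Since $\abs{A-z}^{-2}$ dominates $\jnorm{A}^{-2}$ uniformly in all directions, $\jnorm{A}^{t_1}\abs{A-z}^{-2k}$ is bounded as soon as $t_1\le 2k$, regardless of how the imaginary parts $\Im z_j$ are distributed. The cost is that the commutator expansion of $[B,\abs{A-z}^{-2}]$ is combinatorially heavier (one needs an explicit formula for $\partial^\alpha\abs{t-z}^{-2}$ and an induction producing the structured remainder), but the payoff is that every remainder term carries a full factor $\abs{A-z}^{-2}$ on the right and enough powers of $\abs{A-z}^{-2}$ on the left to absorb $\jnorm{A}^{t_1}$ with $t_1\le n+1$. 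Your approach would go through for $t_1=t_2=0$ (after tightening the almost-analytic bound, whose stated exponent $s-\nu-N$ should involve $\nu N$ rather than $N$), but not for the weighted statement claimed in the theorem.
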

\begin{remark}
  A similar statement holds with the $\ad{A}{\alpha}(B)$ and
  $\partial^\alpha f_\lambda(A)$ interchanged at the cost of a sign
  correction given by $(-1)^{\abs\alpha-1}$, and the corresponding
  remainder term
  $R'_{\lambda,n}(A,B)\in\cB(\cH_A^{-t_1},\cH_A^{t_2})$. This can be
  seen either by proving it analogously or by taking the adjoint
  equation and replacing $B$ by $-B$. 
\end{remark}
\begin{remark}
  If $k\le t_1$ and $n_0\ge n+1+k$, then $R_{\lambda,n}(A,B)$ can be
  replaced by
  $R_{\lambda,n}^k(A,B)\in\cB(\cH_A^{-t_2+k},\cH_A^{t_1-k})$. This can
  be seen by commuting $\abs{A-z}^{-2}$ and $\ad{A}{\alpha}(B)$ in the
  terms of the remainder, see page~\pageref{remaindersiden}.
\end{remark}
\section{The Proof}
Let $z\in\C^\nu$, $\Im{z}\ne0$, $1\le\ell\le\nu$ and
$g,g_\ell\colon\R^\nu\to\C$ be given as $g(t)=\abs{t-z}^{-2}$ and
$g_\ell(t)=t_\ell-\bar z_\ell$. Write for $2\beta\le\alpha$
\begin{equation*}
  T_\alpha^\beta(t,z):=\tfrac{(-2)^{\abs{\alpha-\beta}}\abs{\alpha-\beta}!}{2^{\abs{\beta}}\beta!(\alpha-2\beta)!}(t-\Re z)^{\alpha-2\beta}\abs{t-z}^{-2\abs{\alpha-\beta}}.
\end{equation*}
\begin{lemma}\label{lem:lemma0} Let $g$ be as above and $\alpha$ be
  any multi-index. Then
  \begin{equation*}
    \partial^\alpha g(t)=\sum_{2\beta\le\alpha}\alpha!T_\alpha^\beta(t,z)\abs{t-z}^{-2}.
  \end{equation*}
\end{lemma}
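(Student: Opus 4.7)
\medskip
\noindent\textbf{Proof plan.} The natural approach is induction on $|\alpha|$. Write $u := t - \Re z$, $\rho := |t - z|^2$, and abbreviate $c_{\alpha,\beta} := (-2)^{|\alpha-\beta|}|\alpha-\beta|!/(2^{|\beta|}\beta!(\alpha-2\beta)!)$, so the claim reads $\partial^\alpha \rho^{-1} = \alpha!\sum_{2\beta\le\alpha} c_{\alpha,\beta}\, u^{\alpha-2\beta}\,\rho^{-|\alpha-\beta|-1}$. The base case $\alpha = 0$ is immediate since $g(t) = \rho^{-1}$.

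For the inductive step, I fix $1\le j\le\nu$ and apply $\partial_j$ termwise to the inductive hypothesis. Using $\partial_j u^\mu = \mu_j\, u^{\mu-\delta_j}$ and $\partial_j \rho^{-m} = -2m u_j \rho^{-m-1}$, each $\beta$-summand splits into two pieces, which I reindex by a new multi-index $\gamma$ with $2\gamma \le \alpha+\delta_j$: set $\gamma := \beta + \delta_j$ in the contribution coming from $\partial_j u^{\alpha-2\beta}$, and $\gamma := \beta$ in the one coming from $\partial_j\rho^{-|\alpha-\beta|-1}$. Both reindexed terms are proportional to $u^{\alpha+\delta_j-2\gamma}\rho^{-|\alpha+\delta_j-\gamma|-1}$, matching the target shape.

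The crux is then a short factorial manipulation: using $(\gamma - \delta_j)! = \gamma!/\gamma_j$ and $(\alpha+\delta_j-2\gamma)! = (\alpha_j - 2\gamma_j + 1)(\alpha-2\gamma)!$, the two contributions rewrite as $\alpha!\cdot 2\gamma_j\, c_{\alpha+\delta_j,\gamma}$ and $\alpha!(\alpha_j-2\gamma_j+1)\,c_{\alpha+\delta_j,\gamma}$ respectively, whose sum is $\alpha!(\alpha_j+1)\,c_{\alpha+\delta_j,\gamma} = (\alpha+\delta_j)!\, c_{\alpha+\delta_j,\gamma}$, exactly as required. The edge cases $\gamma_j = 0$ and $2\gamma_j = \alpha_j+1$, where one of the two contributions is absent from the inductive sum, cause no trouble, because the would-be missing term has vanishing prefactor $2\gamma_j = 0$ or $\alpha_j - 2\gamma_j + 1 = 0$ respectively. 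The only real obstacle is careful bookkeeping of the multi-index factorials and signs; there is no analytic content beyond the product rule.
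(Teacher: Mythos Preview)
Your proof is correct and follows essentially the same route as the paper's: induction on $|\alpha|$, product rule splitting each summand into two pieces, and the same reindexing $\beta\mapsto\beta+\delta_j$ in the piece coming from $\partial_j u^{\alpha-2\beta}$. Your bookkeeping is slightly more streamlined---you add the two contributions $2\gamma_j$ and $\alpha_j+1-2\gamma_j$ directly, whereas the paper splits the second into $(\alpha_j+1)-2\beta_j$ and cancels before recombining---but the argument is the same.
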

\begin{proof}
  For brevity, we will write $\alpha^i$ or $\beta^i$ for
  $\alpha+\delta_i$ or $\beta+\delta_i$, respectively. The formula is
  obviously true for $\abs{\alpha}\le1$. Now assume that we have
  proven the formula for $\abs{\alpha}\le k$. Let $\abs{\alpha}=k$ and
  $0\le i\le\nu$ be arbitrary. It suffices to prove the formula for
  $\alpha^i$. One easily verifies using the chain rule that
  \begin{equation}
    (\partial^{\delta_i}g^n)(t)=-2n(t_i-\Re z_i)\abs{t-z}^{-2n-2}.\label{eq:chainrule}
  \end{equation}
  Now by the induction hypothesis, we see that
  \begin{align}
    \partial^{\alpha+\delta_i}g(t)={} &\partial_t^{\delta_i}\smashoperator{\sum_{2\beta\le\alpha}}\tfrac{(-2)^{\abs{\alpha-\beta}}\alpha!\abs{\alpha-\beta}!}{2^{\abs\beta}\beta!(\alpha-2\beta)!}(t-\Re
    z)^{\alpha-2\beta}\abs{t-z}^{-2\abs{\alpha-\beta}-2}\nonumber\\
   ={} &\smashoperator{\sum_{2\beta\le\alpha}}\tfrac{(-2)^{\abs{\alpha-\beta}}\alpha!\abs{\alpha-\beta}!}{2^{\abs\beta}\beta!(\alpha-2\beta)!}(\partial_t^{\delta_i}(t-\Re
    z)^{\alpha-2\beta})\abs{t-z}^{-2\abs{\alpha-\beta}-2}\label{eq:dumformel1}\\
    &+\smashoperator{\sum_{2\beta\le\alpha}}\tfrac{(-2)^{\abs{\alpha-\beta}}\alpha!\abs{\alpha-\beta}!}{2^{\abs\beta}\beta!(\alpha-2\beta)!}(t-\Re
    z)^{\alpha-2\beta}(\partial_t^{\delta_i}\abs{t-z}^{-2\abs{\alpha-\beta}-2}).\label{eq:dumformel2}
  \end{align}
  For the sake of clarity, we will now consider each sum
  independently.
  \begin{align}
    \eqref{eq:dumformel1}&=\smashoperator{\sum_{2\beta\le\alpha}}\tfrac{(-2)^{\abs{\alpha-\beta}}\alpha!\abs{\alpha-\beta}!}{2^{\abs\beta}\beta!(\alpha-2\beta)!}(\alpha_i-2\beta_i)(t-\Re
    z)^{\alpha-2\beta-\delta_i}\abs{t-z}^{-2\abs{\alpha-\beta}-2}\nonumber\\
    &=\smashoperator{\sum_{\substack{2\beta\le\alpha\\2\beta_i<\alpha_i}}}2(\beta_i+1)\tfrac{(-2)^{\abs{\alpha^i-\beta^i}}\alpha!\abs{\alpha^i-\beta^i}!}{2^{\abs{\beta^i}}\beta^i!(\alpha^i-2\beta^i)!}(t-\Re
    z)^{\alpha^i-2\beta^i}\abs{t-z}^{-2\abs{\alpha^i-\beta^i}-2}\nonumber\\
    &=\smashoperator{\sum_{2\beta\le\alpha+\delta_i}}2\beta_i\tfrac{(-2)^{\abs{\alpha^i-\beta}}\alpha!\abs{\alpha^i-\beta}!}{2^{\abs{\beta}}\beta!(\alpha^i-2\beta)!}(t-\Re
    z)^{\alpha^i-2\beta}\abs{t-z}^{-2\abs{\alpha^i-\beta}-2}.\label{eq:2beta1}
  \end{align}
  Using \eqref{eq:chainrule}, we see that \eqref{eq:dumformel2} equals
  \begin{align}
    \MoveEqLeft[3]
    \smashoperator{\sum_{2\beta\le\alpha}}\tfrac{(-2)^{\abs{\alpha-\beta}}\alpha!\abs{\alpha-\beta}!}{2^{\abs\beta}\beta!(\alpha-2\beta)!}(t-\Re 
    z)^{\alpha-2\beta}(-2)(\abs{\alpha-\beta}+1)(t_i-\Re
    z_i)\abs{t-z}^{-2\abs{\alpha-\beta}-4}\nonumber\\
    ={} &\smashoperator{\sum_{2\beta\le\alpha}}(\alpha_i+1-2\beta_i)\tfrac{(-2)^{\abs{\alpha^i-\beta}}\alpha!\abs{\alpha^i-\beta}!}{2^{\abs\beta}\beta!(\alpha^i-2\beta)!}(t-\Re
    z)^{\alpha^i-2\beta}\abs{t-z}^{-2\abs{\alpha^i-\beta}-2}\nonumber\\
    ={} &\smashoperator{\sum_{2\beta\le\alpha}}\tfrac{(-2)^{\abs{\alpha^i-\beta}}\alpha^i!\abs{\alpha^i-\beta}!}{2^{\abs\beta}\beta!(\alpha^i-2\beta)!}(t-\Re
    z)^{\alpha^i-2\beta}\abs{t-z}^{-2\abs{\alpha^i-\beta}-2}\label{eq:2betamindreendalpha}\\
    &-\smashoperator{\sum_{2\beta\le\alpha}}2\beta_i\tfrac{(-2)^{\abs{\alpha^i-\beta}}\alpha!\abs{\alpha^i-\beta}!}{2^{\abs\beta}\beta!(\alpha^i-2\beta)!}(t-\Re
    z)^{\alpha^i-2\beta}\abs{t-z}^{-2\abs{\alpha^i-\beta}-2}.\label{eq:2beta2}
  \end{align}
  Now \eqref{eq:2beta2} cancels \eqref{eq:2beta1} except for possible
  terms with $2\beta=\alpha+\delta_i$:
  \begin{equation}
    \label{eq:2betaligalphai}
    \eqref{eq:2beta1}+\eqref{eq:2beta2}=\smashoperator{\sum_{2\beta=\alpha+\delta_i}}\tfrac{(-2)^{\abs{\alpha^i-\beta}}\alpha^i!\abs{\alpha^i-\beta}!}{2^{\abs\beta}\beta!(\alpha^i-2\beta)!}(t-\Re
    z)^{\alpha^i-2\beta}\abs{t-z}^{-2\abs{\alpha^i-\beta}-2}.
  \end{equation}
  Adding \eqref{eq:2betamindreendalpha} and \eqref{eq:2betaligalphai}
  finishes the induction.
\end{proof}
\begin{lemma}\label{lem:lemma1}
  Let $B\in C^{n_0}(A)$ for some $n_0\ge1$ and let $n\in\N_0$ and
  $\alpha_0$ be a multi-index satisfying $\abs{\alpha_0}+n+1\le
  n_0$. Then
  \begin{equation}
    [\ad{A}{\alpha_0}(B),g(A)]=\label{eq:kommutator}
\sum_{\abs\alpha=1}^n\frac{1}{\alpha!}
    \partial^\alpha g(A)
    \ad{A}{\alpha_0+\alpha}(B)+R_n^g(A,\ad{A}{\alpha_0}(B)),
  \end{equation}
  where
  \begin{align}
    \MoveEqLeft[3] R_n^g(A,\ad{A}{\alpha_0}(B))\nonumber\\
={} &\smashoperator[l]{\sum_{\substack{\abs\alpha=n-1\\2\beta\le\alpha}}}\sum_{i=1}^\nu
    \tfrac{\beta_i+1}{\abs{\alpha+\kdi-\beta}}
    T_{\alpha+2\kdi}^{\beta+\kdi}(A,z)\ad{A}{\alpha_0+\alpha+2\kdi}(B)\abs{A-z}^{-2}
    \label{eq:restled1}\\
    &+\smashoperator[l]{\sum_{\substack{\abs\alpha=n\\2\beta\le\alpha}}}\sum_{i=1}^\nu
    \tfrac{\beta_i+1}{\abs{\alpha+\kdi-\beta}}
    T_{\alpha+2\kdi}^{\beta+\kdi}(A,z)(A_i-\bar
    z_i)\ad{A}{\alpha_0+\alpha+\kdi}(B)\abs{A-z}^{-2}
    \label{eq:restled2}\\
    &+\smashoperator[l]{\sum_{\substack{\abs\alpha=n\\2\beta\le\alpha}}}\sum_{i=1}^\nu
    \tfrac{\beta_i+1}{\abs{\alpha+\kdi-\beta}}
    T_{\alpha+2\kdi}^{\beta+\kdi}(A,z)\ad{A}{\alpha_0+\alpha+\kdi}(B)
    (A_i-z_i)\abs{A-z}^{-2}.
    \label{eq:restled3}
  \end{align}
\end{lemma}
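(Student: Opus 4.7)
The plan is to prove the lemma by induction on $n\ge 0$. For the base case $n=0$ the main sum is empty, so the claim reduces to $[\ad{A}{\alpha_0}(B), g(A)] = R_0^g(A,\ad{A}{\alpha_0}(B))$. Writing $C=\ad{A}{\alpha_0}(B)$, $G=g(A)=|A-z|^{-2}$, and $\Lambda=G^{-1}=\sum_j(A_j-z_j)(A_j-\bar z_j)$, the identity $[C,G]=-G[C,\Lambda]G$, combined with the Leibniz rule and the pairwise commutativity of the $A_j$, gives
\begin{equation*}
  [C,G] = -G\sum_j\bigl((A_j-z_j)\ad{A}{\alpha_0+\kdj}(B) + \ad{A}{\alpha_0+\kdj}(B)(A_j-\bar z_j)\bigr)G.
\end{equation*}
Since $(A_j-z_j)$ and $(A_j-\bar z_j)$ commute with $G$, and $T^{\kdj}_{2\kdj}(t,z)=-|t-z|^{-2}$ follows directly from the definition of $T^\beta_\alpha$, this expression matches the $|\alpha|=0$, $\beta=0$ contributions to \eqref{eq:restled2} and \eqref{eq:restled3}; \eqref{eq:restled1} is empty since the range $|\alpha|=-1$ is vacuous.

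For the inductive step I would manipulate $R_n^g$ in two stages. First, in \eqref{eq:restled2}+\eqref{eq:restled3}, the identity $(A_i-\bar z_i)\ad{A}{\gamma}(B) = \ad{A}{\gamma}(B)(A_i-\bar z_i) - \ad{A}{\gamma+\kdi}(B)$ (and its analogue for $(A_i-z_i)$) collects the pair of factors into a "main" piece $2(A_i-\Re z_i)\ad{A}{\alpha_0+\alpha+\kdi}(B)$ plus a "leftover" proportional to $\ad{A}{\alpha_0+\alpha+2\kdi}(B)$. The leftover is already of the exact form of \eqref{eq:restled1} at level $n+1$, while the combinatorial identity
\begin{equation*}
  2T^{\beta+\kdi}_{\alpha+2\kdi}(t,z)(t_i-\Re z_i)=\tfrac{\alpha_i+1-2\beta_i}{\beta_i+1}T^\beta_{\alpha+\kdi}(t,z),
\end{equation*}
which follows from the definition of $T^\beta_\alpha$, rewrites the main piece in terms of $T^\beta_{\alpha+\kdi}$. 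Second, I commute each remaining $\ad{A}{\gamma}(B)$ past the trailing $|A-z|^{-2}$ using the already-proved base case applied to $\ad{A}{\gamma}(B)$ in place of $C$, and do the same to the \eqref{eq:restled1}-piece of $R_n^g$. After reindexing $\alpha'=\alpha+\kdi$ in the reworked \eqref{eq:restled2}+\eqref{eq:restled3} and $\alpha'=\alpha+2\kdi$, $\beta\mapsto\beta+\kdi$ in \eqref{eq:restled1}, the "clean" pieces from the two sources carry weights $(|\alpha'|-2|\beta|)/|\alpha'-\beta|$ and $|\beta|/|\alpha'-\beta|$, which add to $1$. Lemma~\ref{lem:lemma0} then identifies the sum as the new main term $\sum_{|\alpha'|=n+1}\frac{1}{\alpha'!}\partial^{\alpha'}g(A)\ad{A}{\alpha_0+\alpha'}(B)$, while the commutator residues from the second stage assemble with the leftovers from the first into the three sums constituting $R_{n+1}^g$.

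The hard part will be the combinatorial bookkeeping: checking that the "main term" weights from the two sources add to exactly $1$ (which rests on the identity $(|\alpha'|-2|\beta|)+|\beta|=|\alpha'-\beta|$), and that the commutator residues, after reindexing, produce precisely the prefactor $\frac{\beta_i+1}{|\alpha+\kdi-\beta|}$ that appears in each of \eqref{eq:restled1}, \eqref{eq:restled2} and \eqref{eq:restled3} at level $n+1$. The hypothesis $|\alpha_0|+n+1\le n_0$ ensures that every iterated commutator $\ad{A}{\gamma}(B)$ encountered during the computation is a bounded operator, so all algebraic manipulations are legitimate on $\cH$.
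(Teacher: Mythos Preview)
Your proposal is correct and follows essentially the same route as the paper's proof: induction on $n$, the base case via $[C,G]=-G[C,\Lambda]G$, and the inductive step by combining \eqref{eq:restled2}+\eqref{eq:restled3} into a $2(A_i-\Re z_i)$ piece plus a higher-order leftover (the paper's identity \eqref{eq:h2}), then commuting the iterated commutators past the trailing $|A-z|^{-2}$ via the base case, and finally invoking Lemma~\ref{lem:lemma0} once the $i$-summed weights $\frac{\beta_i}{|\alpha-\beta|}$ and $\frac{\alpha_i-2\beta_i}{|\alpha-\beta|}$ are seen to add to $1$. The only cosmetic difference is the order: the paper processes \eqref{eq:restled1} first and \eqref{eq:restled2}+\eqref{eq:restled3} second, whereas you reverse this, and the paper also records the auxiliary identity \eqref{eq:h1} to absorb the extra $|A-z|^{-2}$ produced by the base-case commutation into the $T$-factor (a step you leave implicit).
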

\begin{proof} The proof goes by induction. One may check by inspection
  of the following identity that the statement is true for $n=0$.
\begin{equation}
  \begin{split}
    [\ad{A}{\alpha_0}(B),\abs{A-z}^{-2}]={} &-\sum_{i=1}^\nu\abs{A-z}^{-2}(A_i-\bar z_i)\ad{A}{\alpha_0+\delta_i}(B)\abs{A-z}^{-2}\\
    &-\sum_{i=1}^\nu\abs{A-z}^{-2}\ad{A}{\alpha_0+\delta_i}(B)(A_i-z_i)\abs{A-z}^{-2}.
\end{split}\label{eq:basestep}
\end{equation}
Now assume that we have proven the formula for $k\le n$,
$\abs{\alpha_0}+n+2\le n_0$. We will now show that this implies that
the formula holds for $k=n+1$. We begin by noting two useful identities.
\begin{equation}\label{eq:h1}
  T_\alpha^\beta(t,z)\abs{t-z}^{-2}=-\tfrac{\beta_j+1}{\abs{\alpha+\delta_j-\beta}}T_{\alpha+2\delta_j}^{\beta+\delta_j}(t,z).
\end{equation}
\begin{equation}
  \label{eq:h2}
  (\beta_i+1)T_{\alpha+2\delta_i}^{\beta+\delta_i}(t,z)2(t_i-\Re z_i)=(\alpha_i+1-2\beta_i)T_{\alpha+\delta_i}^\beta(t,z).
\end{equation}
Now using \eqref{eq:basestep} and \eqref{eq:h1} we see that
\begin{align}
  \eqref{eq:restled1}={} &\smashoperator[l]{\sum_{\abs{\alpha}=n-1}}\sum_{2\beta\le\alpha}\sum_{i=1}^\nu
  \tfrac{\beta_i+1}{\abs{\alpha+\kdi-\beta}}
  T_{\alpha+2\kdi}^{\beta+\kdi}(A,z)\abs{A-z}^{-2}\ad{A}{\alpha_0+\alpha+2\kdi}(B)\label{eq:restled11}\\
  \begin{split}&+\smashoperator[l]{\sum_{\abs{\alpha}=n-1}}\sum_{2\beta\le\alpha}\sum_{i=1}^\nu\sum_{j=1}^\nu
    \tfrac{\beta_i+1}{\abs{\alpha+\kdi-\beta}}\tfrac{\beta_j+\delta_{ij}+1}{\abs{\alpha+\delta_i+\delta_j-\beta}}
    T_{\alpha+2\kdi+2\kdj}^{\beta+\kdi+\kdj}(A,z)\\
    &\qquad\times(A_j-\bar z_j)\ad{A}{\alpha_0+\alpha+2\kdi+\kdj}(B)\abs{A-z}^{-2}
  \end{split}\label{eq:restled12}\\
  \begin{split}&+\smashoperator[l]{\sum_{\abs{\alpha}=n-1}}\sum_{2\beta\le\alpha}\sum_{i=1}^\nu\sum_{j=1}^\nu
    \tfrac{\beta_i+1}{\abs{\alpha+\kdi-\beta}}\tfrac{\beta_j+\delta_{ij}+1}{\abs{\alpha+\delta_i+\delta_j-\beta}}
    T_{\alpha+2\kdi+2\kdj}^{\beta+\kdi+\kdj}(A,z)\\
    &\qquad\times\ad{A}{\alpha_0+\alpha+2\kdi+\kdj}(B)(A_j-z_j)\abs{A-z}^{-2},
  \end{split}\label{eq:restled13}
\end{align}
and by reordering and reindexing the sum in \eqref{eq:restled11},
\eqref{eq:restled12} and \eqref{eq:restled13}, we get
\begin{equation}
  \eqref{eq:restled11}=\sum_{i=1}^\nu{\sum_{\substack{\abs{\alpha}=n+1\\\alpha_i\ge2}}}\sum_{\substack{2\beta\le\alpha\\\beta_i\ge1}}
  \tfrac{\beta_i}{\abs{\alpha-\beta}}T_{\alpha}^{\beta}(A,z)\abs{A-z}^{-2}\ad{A}{\alpha_0+\alpha}(B),\label{eq:restled11b}
\end{equation}
and \eqref{eq:restled12} equals
\begin{equation}
\sum_{i=1}^\nu{\sum_{\substack{\abs{\alpha}=n+1\\\alpha_i\ge2}}}\sum_{\substack{2\beta\le\alpha\\\beta_i\ge1}}\sum_{j=1}^\nu
    \tfrac{\beta_i}{\abs{\alpha-\beta}}\tfrac{\beta_j+1}{\abs{\alpha+\delta_j-\beta}}
    T_{\alpha+2\kdj}^{\beta+\kdj}(A,z) (A_j-\bar
    z_j)\ad{A}{\alpha_0+\alpha+\kdj}(B)\abs{A-z}^{-2}
\label{eq:restled12b}
\end{equation}
and similarly for \eqref{eq:restled13} with the  factor
$(A_j-\bar
z_j)\ad{A}{\alpha_0+\alpha+\kdj}(B)$ replaced by the factor 
$\ad{A}{\alpha_0+\alpha+\kdj}(B)(A_j-\nobreak z_j)$. Note that we may relax the
extra conditions on $\alpha$ and $\beta$ in the above statements, as a
term with $\beta_i=0$ contributes nothing.

Instead of continuing in the same fashion with \eqref{eq:restled2} and
\eqref{eq:restled3}, we note using \eqref{eq:h2} that
\begin{align}
  \eqref{eq:restled2}+\eqref{eq:restled3}
={} &\smashoperator[l]{\sum_{\abs{\alpha}=n}}\sum_{2\beta\le\alpha}\sum_{i=1}^\nu\tfrac{\beta_i+1}{\abs{\alpha+\kdi-\beta}}
T_{\alpha+2\kdi}^{\beta+\kdi}(A,z)\ad{A}{\alpha_0+\alpha+2\kdi}(B)
\abs{A-z}^{-2}\label{eq:restled231}\\
  &+\smashoperator[l]{\sum_{\abs{\alpha}=n}}\sum_{2\beta\le\alpha}\sum_{i=1}^\nu\tfrac{\alpha_i+1-2\beta_i}{\abs{\alpha+\kdi-\beta}}
T_{\alpha+\kdi}^{\beta}(A,z)\ad{A}{\alpha_0+\alpha+\kdi}(B)\abs{A-z}^{-2}
,\label{eq:restled232}
\end{align}
so we may focus our attention on \eqref{eq:restled232}:
\begin{align}
  \eqref{eq:restled232}= {} &
  \sum_{i=1}^\nu\sum_{\substack{\abs{\alpha}=n+1\\\alpha_i\ge 1}}
  \sum_{\substack{2\beta\le\alpha\\2\beta_i<\alpha_i}}\tfrac{\alpha_i-2\beta_i}{\abs{\alpha-\beta}}T_{\alpha}^\beta(A,z)\abs{A-z}^{-2}\ad{A}{\alpha_0+\alpha}(B)\label{eq:restled2321}\\
  & \begin{aligned}
    +&\sum_{i=1}^\nu\sum_{\substack{\abs{\alpha}=n+1\\\alpha_i\ge 1}}
  \sum_{\substack{2\beta\le\alpha\\2\beta_i<\alpha_i}}\sum_{j=1}^\nu\tfrac{\alpha_i-2\beta_i}{\abs{\alpha-\beta}}\tfrac{\beta_j+1}{\abs{\alpha+\delta_j-\beta}}T_{\alpha+2\kdj}^{\beta+\kdj}(A,z)\\
  &\quad\times(A_j-\bar z_j)\ad{A}{\alpha_0+\alpha+\kdj}(B)\abs{A-z}^{-2}.
  \end{aligned}\label{eq:restled2323}\\
  &\begin{aligned}
    +&\sum_{i=1}^\nu\sum_{\substack{\abs{\alpha}=n+1\\\alpha_i\ge 1}}
  \sum_{\substack{2\beta\le\alpha\\2\beta_i<\alpha_i}}\sum_{j=1}^\nu\tfrac{\alpha_i-2\beta_i}{\abs{\alpha-\beta}}\tfrac{\beta_j+1}{\abs{\alpha+\delta_j-\beta}}T_{\alpha+2\kdj}^{\beta+\kdj}(A,z)\\
  &\quad\times\ad{A}{\alpha_0+\alpha+\kdj}(B)(A_j-z_j)\abs{A-z}^{-2}
  \end{aligned}\label{eq:restled2322}
\end{align}
We note again that the additional conditions on $\alpha$ and $\beta$
are superfluous. 

We may now recollect the terms. First we see using
Lemma~\ref{lem:lemma0}:
\begin{equation}
    \sum_{\abs{\alpha}=1}^n\frac{1}{\alpha!}\partial^\alpha
    g(A)\ad{A}{\alpha_0+\alpha}(B)+\eqref{eq:restled11b}+
    \eqref{eq:restled2321}=\sum_{\abs{\alpha}=1}^{n+1}\frac{1}{\alpha!}\partial^\alpha
    g(A)\ad{A}{\alpha_0+\alpha}(B),\label{eq:isum}
\end{equation}
then
\begin{equation}
  \eqref{eq:restled12b}+\eqref{eq:restled2323}=
  \smashoperator[l]{\sum_{\substack{\abs{\alpha}=n+1\\2\beta\le\alpha}}}
  \sum_{j=1}^\nu\tfrac{\beta_j+1}{\abs{\alpha+\delta_j-\beta}}T_{\alpha+2\kdj}^{\beta+\kdj}(A,z)(A_j-\bar
  z_j)\ad{A}{\alpha_0+\alpha+\kdj}(B)\abs{A-z}^{-2}\label{eq:irestled2},
\end{equation}
and
\begin{equation}
    \eqref{eq:restled13}+\eqref{eq:restled2322}=\smashoperator[l]{\sum_{\substack{\abs{\alpha}=n+1\\2\beta\le\alpha}}}
  \sum_{j=1}^\nu\tfrac{\beta_j+1}{\abs{\alpha+\delta_j-\beta}}T_{\alpha+2\kdj}^{\beta+\kdj}(A,z)\ad{A}{\alpha_0+\alpha+\kdj}(B)(A_j-
  z_j)\abs{A-z}^{-2}\label{eq:irestled3},
\end{equation}
so adding up, we have proved that \eqref{eq:kommutator} equals the sum
of \eqref{eq:isum}, \eqref{eq:restled231}, \eqref{eq:irestled2} and
\eqref{eq:irestled3} as stated.
\end{proof}

The following lemma plays the same role for $g_\ell$ as
Lemma~\ref{lem:lemma1} plays for $g$, but contrary to
Lemma~\ref{lem:lemma1}, the proof is trivial.
\begin{lemma}\label{lem:lemma2}Let $B\in C^{n_0}(A)$ for some $n_0\ge1$ and let $n\in\N_0$ and
  $\alpha_0$ be a multi-index satisfying $\abs{\alpha_0}+n+1\le
  n_0$. Then
  \begin{align*}
    [\ad{A}{\alpha_0}(B),g_\ell(A)]
    &=\sum_{\abs\alpha=1}^n \frac{1}{\alpha!} \partial^\alpha
    g_\ell(A)\ad{A}{\alpha_0+\alpha}(B)+R_n^{g_\ell}(A,\ad{A}{\alpha_0}(B)),
  \end{align*}
  where $R_n^{g_\ell}(A,\ad{A}{\alpha_0}(B))=0$ for $n\ge1$,
  $R_0^{g_\ell}(A,\ad{A}{\alpha_0}(B))=\ad{A}{\alpha_0+\kdl}(B)$.
\end{lemma}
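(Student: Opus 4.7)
The plan is to observe that $g_\ell$ is essentially a coordinate function, so the whole statement reduces to a direct computation with no combinatorial machinery needed. This is why the author flags the proof as trivial.

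First I would note that $g_\ell(A) = A_\ell - \bar z_\ell$ by the functional calculus, and since the scalar $\bar z_\ell$ commutes with everything,
\begin{equation*}
  [\ad{A}{\alpha_0}(B), g_\ell(A)] = [\ad{A}{\alpha_0}(B), A_\ell] = \ad{A}{\alpha_0+\kdl}(B),
\end{equation*}
the last equality being the very definition of the iterated commutator on $\cD(A_\ell)$; the hypothesis $\abs{\alpha_0}+n+1\le n_0$ with $n\ge0$ ensures $\abs{\alpha_0}+1\le n_0$, so $\ad{A}{\alpha_0+\kdl}(B)$ is bounded and the form identity extends to all of $\cH$.

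Next I would compute the derivatives of $g_\ell(t) = t_\ell - \bar z_\ell$: we have $\partial^{\kdl} g_\ell \equiv 1$, $\partial^{\kdj} g_\ell \equiv 0$ for $j\ne\ell$, and $\partial^\alpha g_\ell \equiv 0$ for $\abs\alpha\ge 2$. Hence for $n\ge 1$,
\begin{equation*}
  \sum_{\abs\alpha=1}^n \frac{1}{\alpha!}\partial^\alpha g_\ell(A)\ad{A}{\alpha_0+\alpha}(B)
  = \frac{1}{\kdl!}\cdot\mathbf{1}\cdot\ad{A}{\alpha_0+\kdl}(B) = \ad{A}{\alpha_0+\kdl}(B),
\end{equation*}
using $\kdl!=1$. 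Comparing with the left-hand side forces $R_n^{g_\ell}(A,\ad{A}{\alpha_0}(B))=0$ for $n\ge1$. For $n=0$ the sum is empty, so the entire commutator $\ad{A}{\alpha_0+\kdl}(B)$ is absorbed into the remainder, giving $R_0^{g_\ell}(A,\ad{A}{\alpha_0}(B))=\ad{A}{\alpha_0+\kdl}(B)$, as claimed.

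There is no real obstacle here; the only thing to be a touch careful about is checking that the multi-index factorial $\kdl!$ equals $1$ and that the single nonzero derivative of $g_\ell$ picks out precisely the one multi-index $\alpha=\kdl$ in the sum. No induction or use of Lemma~\ref{lem:lemma0} is required.
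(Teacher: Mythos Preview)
Your proof is correct and is exactly the trivial verification the paper has in mind: the paper gives no argument beyond the remark that ``contrary to Lemma~\ref{lem:lemma1}, the proof is trivial,'' and your direct computation of $[\ad{A}{\alpha_0}(B),A_\ell-\bar z_\ell]=\ad{A}{\alpha_0+\kdl}(B)$ together with the observation that $\partial^\alpha g_\ell$ vanishes except at $\alpha=\kdl$ is precisely that triviality spelled out.
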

The following lemma also follows by induction.
\begin{lemma}\label{lem:1}
  Let $B\in C^{n_0}(A)$ for some $n_0\ge1$. Assume that
  $h_i\in C^\infty(\R^\nu)$, $1\le i\le k$, satisfies
  \begin{equation*}
    [\ad{A}{\alpha_0}(B),h_i(A)]=\sum_{\abs\alpha=1}^n
    \frac{1}{\alpha!} \partial^\alpha
    h_i(A)\ad{A}{\alpha_0+\alpha}(B)+R_n^{h_i}(A,\ad{A}{\alpha_0}(B)),
  \end{equation*}
  where $R_n^{h_i}(A,\ad{A}{\alpha_0}(B))$ is bounded for all $n\in\N_0$
  and multi-indices $\alpha_0$ satisfying $\abs{\alpha_0}+n+1\le n_0$
  and $\partial^\alpha h_i(A)$ is bounded for all $1\le\abs{\alpha}\le
  n_0-1$. Then
\begin{align*}
  \MoveEqLeft\Bigl[B,\prod_{i=1}^k h_i(A)\Bigr]=\sum_{\abs{\alpha}=1}^n
  \frac{1}{\alpha!}\partial^\alpha \Bigl(\prod_{i=1}^k h_i\Bigr)
  (A)\ad{A}{\alpha}(B)
  \\
  &+\sum_{j=1}^k\sum_{\abs{\alpha}=0}^n\frac{1}{\alpha!}\partial^\alpha
  \Bigl(\prod_{i=1}^{j-1}h_i\Bigr)
  (A)R_{n-\abs{\alpha}}^{h_j}(A,\ad{A}{\alpha}(B))\prod_{i=j+1}^k h_i(A).
\end{align*}
\end{lemma}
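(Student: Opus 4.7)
The proof will proceed by induction on $k$, the number of factors. The base case $k=1$ is immediate: since $\prod_{i=1}^{0} h_i$ is the empty product equal to $1$, we have $\partial^\alpha(1) = 0$ for $|\alpha|\ge 1$, so the only surviving remainder contribution on the right-hand side is the $j=1$, $\alpha=0$ term $R_n^{h_1}(A,B)$ (with $\prod_{i=2}^{1}h_i=1$), and the formula collapses to the standing hypothesis on $h_1$.

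For the inductive step, write $\prod_{i=1}^{k+1} h_i(A) = \bigl(\prod_{i=1}^{k} h_i(A)\bigr)\, h_{k+1}(A)$ and apply the derivation identity $[B,XY]=[B,X]Y+X[B,Y]$. To the first piece $[B,\prod_{i=1}^{k} h_i(A)]\,h_{k+1}(A)$ I would apply the inductive hypothesis, and to $\prod_{i=1}^{k} h_i(A)\,[B,h_{k+1}(A)]$ I would apply the standing hypothesis on $h_{k+1}$ (the $k=1$ case). The resulting expression is not yet in the desired form, because the inductive piece yields mixed terms of the shape $\partial^\alpha\bigl(\prod_{i=1}^k h_i\bigr)(A)\,\ad{A}{\alpha}(B)\,h_{k+1}(A)$, where the iterated commutator is sandwiched between two functions of $A$.

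To bring them into Leibniz form I would commute each $\ad{A}{\alpha}(B)$ past $h_{k+1}(A)$ by reusing the standing hypothesis with $\alpha_0=\alpha$ and $n$ replaced by $n-|\alpha|$, which is admissible since $|\alpha_0|+(n-|\alpha|)+1 = n+1\le n_0$. This produces three contributions: (a) the pure swap $h_{k+1}(A)\,\ad{A}{\alpha}(B)$, (b) a sum $\sum_{|\beta|=1}^{n-|\alpha|}\frac{1}{\beta!}\partial^\beta h_{k+1}(A)\,\ad{A}{\alpha+\beta}(B)$, and (c) the remainder $R_{n-|\alpha|}^{h_{k+1}}(A,\ad{A}{\alpha}(B))$. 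Since all functions of the commuting vector $A$ mutually commute, contributions (a) and (b), combined with the analogous terms coming from $\prod_{i=1}^{k}h_i(A)\,[B,h_{k+1}(A)]$, pair up via the multi-index Leibniz rule
\[
  \frac{1}{\gamma!}\partial^\gamma(fg)=\sum_{\alpha+\beta=\gamma}\frac{1}{\alpha!\beta!}\,\partial^\alpha f\,\partial^\beta g
\]
to assemble precisely into the desired main term $\sum_{|\gamma|=1}^{n}\frac{1}{\gamma!}\partial^\gamma\bigl(\prod_{i=1}^{k+1}h_i\bigr)(A)\,\ad{A}{\gamma}(B)$.

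It then remains to collect the remainders. The old remainders carried over from the inductive hypothesis for $j\le k$ simply pick up an extra right factor $h_{k+1}(A)$, exactly matching the shape of the $j\le k$ summands in the target expression. The new $R_{n-|\alpha|}^{h_{k+1}}$ contributions from step (c) (with $|\alpha|\ge 1$) together with the $\alpha=0$ piece $\prod_{i=1}^k h_i(A)\,R_n^{h_{k+1}}(A,B)$ coming from $[B,h_{k+1}(A)]$ combine into the single sum $\sum_{|\alpha|=0}^{n}\frac{1}{\alpha!}\partial^\alpha\bigl(\prod_{i=1}^{k}h_i\bigr)(A)\,R_{n-|\alpha|}^{h_{k+1}}(A,\ad{A}{\alpha}(B))$, which is the $j=k+1$ term of the target formula. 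The only real obstacle is purely combinatorial: keeping track of which pieces carry which factorial prefactors and recognizing that the group (b) terms are exactly the off-diagonal summands required by the Leibniz rule. The boundedness of each piece is automatic from the standing boundedness assumptions on the $R_{n'}^{h_i}$ and $\partial^\alpha h_i(A)$ together with the constraint $|\alpha+\beta|\le n\le n_0-1$.
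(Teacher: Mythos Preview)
Your argument is correct and is precisely the induction on $k$ that the paper indicates (the paper's proof consists only of the sentence ``also follows by induction''). The Leibniz reassembly you describe and the identification of the $j=k+1$ remainder are exactly the bookkeeping needed, and the admissibility check $|\alpha_0|+(n-|\alpha|)+1=n+1\le n_0$ is the right constraint.
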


Let $n+1\le n_0$. If we put $k=\nu+1$, $h_i=g$ for $i\ne\nu$,
$h_{\nu}=g_\ell$ and apply Lemma~\ref{lem:lemma1}, \ref{lem:lemma2}
and \ref{lem:1} we see that
\begin{equation}
  \begin{split}
    \MoveEqLeft{} [B,\abs{A-z}^{-2\nu}(A_\ell-\bar z_\ell)]
    \\
    &=\sum_{\abs{\alpha}=1}^n \frac{1}{\alpha!}\partial^\alpha
    \bigl(\abs{\,\cdot-z}^{-2\nu}(\,\cdot\,_\ell-\bar z_\ell)\bigr)
    (A)\ad{A}{\alpha}(B)+R_{\ell,n}(A,B),
  \end{split}
  \label{eq:AcommB}
\end{equation}
where 
\begin{align}
\MoveEqLeft[3] R_{\ell,n}(A,B)\nonumber
\\\label{eq:restled4}
={} &\sum_{j=1}^{\nu-1}\sum_{\abs{\alpha}=0}^n\frac{1}{\alpha!}\partial^\alpha
  (g^{j-1})
  (A)R_{n-\abs{\alpha}}^{g}(A,\ad{A}{\alpha}(B))\abs{A-z}^{-2(\nu-j)}(A_\ell-\bar
  z_\ell)\\\label{eq:restled5}
  &+\sum_{\abs\alpha=n}\frac{1}{\alpha!}\partial^\alpha
  (g^{\nu-1})
  (A)\ad{A}{\alpha+\kdl}(B)\abs{A-z}^{-2}\\\label{eq:restled6}
  &+\sum_{\abs{\alpha}=0}^n\frac{1}{\alpha!}\partial^\alpha
  (g^{\nu-1}g_\ell)
  (A)R_{n-\abs{\alpha}}^{g}(A,\ad{A}{\alpha}(B))
\end{align}
In the following, we will refer to the terms of $R_{\ell,n}(A,B)$ as
the remainder terms. Let $0\le t_1\le n+1$ and $0\le t_2\le 1$. By Hadamard's three-line
lemma and using
(\ref{eq:restled1}--\ref{eq:restled3}),
(\ref{eq:restled4}--\ref{eq:restled6}), Lemma~\ref{lem:lemma0} and the
identity
\begin{equation*}
  \partial^\alpha\Bigl(\prod_{i=1}^j f_i\Bigr) =
  \smashoperator[l]{\sum_{\sum
      \alpha_i=\alpha}}\frac{\alpha!}{\prod_{i=1}^j\alpha_i!}\prod_{i=1}^j\partial^{\alpha_i}f_i,
\end{equation*}
we may inspect that each remainder term (with $R_{\ell,n}(A,B)$
replaced by the remainder term) and hence $R_{\ell,n}(A,B)$ satisfies
the inequality
\begin{align}\label{eq:Hadamard}
  \norm{\jnorm{A}^{t_1}R_{\ell,n}(A,B)\jnorm{A}^{t_2}}\le
  C\jnorm{z}^{t_1+t_2}\abs{\Im z}^{-n-2\nu}.
\end{align}\label{remaindersiden}
We will now use the functional calculus of almost analytic extensions.
See e.g.\ \cite{DiSj} for details. In the following, we write
$\bar\partial=(\bar\partial_1,\dotsc,\bar\partial_\nu)$ where
$\bar\partial_j=\frac12(\partial_{u_j}+i\partial_{v_j})$ and $u_j+v_j=z_j\in\C$,
$z=(z_1,\dotsc,z_n)\in\C^\nu$. The following proposition is inspired
by  \cite{MRMP1} and \cite[Chap.~X.2]{Tr}.
\begin{proposition}\label{prop:AA}
  Let $s\in\R$ and $\{f_\lambda\}_{\lambda\in I}\subset C^\infty(\R^\nu)$ satisfy
  \begin{align*}\label{eq:AAregularity}
    \forall \alpha\ \exists C_\alpha\colon\abs{\partial^\alpha
      f_\lambda(x)}\le C_\alpha\jnorm{x}^{s-\abs{\alpha}}.
  \end{align*}
  There exists a family of almost analytic extensions $\{\tilde
  f_\lambda\}_{\lambda\in I}\subset C^\infty(\C^\nu)$ satisfying
  \begin{enumerate}[(i)]
  \item $\supp(\tilde f_\lambda)\subset\{u+iv\mid
    u\in\supp(f_\lambda), \abs{v}\le C\jnorm{u}\}.$
  \item\label{item:AA2} $\forall \ell\ge0\ \exists C_\ell\colon\abs{\bar\partial\tilde
      f_\lambda(z)}\le C_\ell\jnorm{z}^{s-\ell-1}\abs{\Im z}^\ell$.
  \end{enumerate}
\end{proposition}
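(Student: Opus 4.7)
The plan is to adapt the one-dimensional Helffer--Sj\"o{}strand--Davies construction, replacing the formal Taylor series in $y$ by its multi-index analogue in $v\in\R^\nu$, and using a Borel-type cutoff procedure to get both smoothness and the full scale of estimates in (ii). Concretely, I would pick $\chi\in C_c^\infty(\R^\nu)$ with $\chi\equiv 1$ in a neighbourhood of $0$ and $\supp\chi\subset\{\abs{v}\le 1\}$, and set
\begin{equation*}
\tilde f_\lambda(u+iv) := \sum_{k=0}^\infty \chi\bigl(\mu_k v/\jnorm{u}\bigr)\sum_{\abs{\alpha}=k}\frac{\partial^\alpha f_\lambda(u)}{\alpha!}(iv)^\alpha,
\end{equation*}
where a positive sequence $\mu_k\to\infty$ is chosen in terms of the constants $C_\alpha$ of the hypothesis only (hence independently of $\lambda\in I$), large enough that the series converges in $C^\infty(\C^\nu)$ uniformly in $\lambda$. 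Property~(i) is then immediate: each summand vanishes outside $\{u\in\supp f_\lambda\}\cap\{\abs{v}\le\jnorm{u}/\mu_k\}$, so the whole support sits inside $\{u\in\supp f_\lambda,\ \abs{v}\le\jnorm{u}/\mu_0\}$.

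The algebraic content of (ii) is the telescoping of the multi-variable Taylor polynomial under $\bar\partial_j$. A direct computation using $\bar\partial_j(iv)^\alpha=-\tfrac{\alpha_j}{2}(iv)^{\alpha-\kdj}$ together with $\alpha_j/\alpha!=1/(\alpha-\kdj)!$ yields, after reindexing,
\begin{equation*}
\bar\partial_j\Bigl[\sum_{\abs{\alpha}\le N}\frac{\partial^\alpha f_\lambda(u)}{\alpha!}(iv)^\alpha\Bigr] = \tfrac{1}{2}\sum_{\abs{\alpha}=N}\frac{\partial^{\alpha+\kdj}f_\lambda(u)}{\alpha!}(iv)^\alpha.
\end{equation*}
Applying $\bar\partial_j$ to $\tilde f_\lambda$ therefore splits into two types of terms: polynomial remainders of this telescoped form, which at each truncation level $k$ still carry a factor $(iv)^\alpha$ with $\abs{\alpha}=k$; and cutoff-derivative terms in which $\bar\partial_j$ hits $\chi(\mu_k v/\jnorm{u})$, hence supported where $\abs{v}\sim\jnorm{u}/\mu_k$, i.e.\ bounded away from the real axis.

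For the quantitative estimate, fix $\ell\ge 0$. On the polynomial side, the hypothesis gives $\abs{\partial^{\alpha+\kdj}f_\lambda(u)}\le C\jnorm{u}^{s-\abs{\alpha}-1}$, and on the support one has $\jnorm{u}\sim\jnorm{z}$ and $\abs{v}^{\abs{\alpha}}\le \jnorm{z}^{\abs{\alpha}-\ell}\abs{\Im z}^\ell$ (valid for $\abs{\alpha}\ge\ell$ since $\abs{v}\le C'\jnorm{z}$), which together yield exactly $\jnorm{z}^{s-\ell-1}\abs{\Im z}^\ell$; the finitely many levels with $k<\ell$ can be absorbed into the constant. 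On the cutoff side, the reverse bound $\abs{v}\ge c\jnorm{u}/\mu_k$ on support lets one trade excess powers of $\abs{v}/\jnorm{u}$ for powers of $\jnorm{z}/\abs{\Im z}$, giving arbitrary algebraic decay and producing only a convergent factor of $\mu_k$.

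The main obstacle is the Borel step: one must fix $\{\mu_k\}$ in advance so that the series and its termwise $\bar\partial$'s converge in $C^\infty(\C^\nu)$ uniformly in $\lambda\in I$, and so that a single extension $\tilde f_\lambda$ satisfies (ii) simultaneously for every $\ell$. This is the device used in the one-dimensional constructions of \cite{DiSj} and \cite{MRMP1}, and it carries over with the $\lambda$-uniformity inherited from the $\lambda$-independence of the hypothesised $C_\alpha$; once this is in place, the only genuinely new ingredient is the telescoping identity displayed above, and the remainder of the argument is multi-index bookkeeping.
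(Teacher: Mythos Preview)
Your proposal is correct and follows essentially the same Borel--Taylor construction as the paper: the paper's proof consists only of writing down the formula
\[
\tilde f(z)=\sum_\alpha\frac{\partial^\alpha f(u)}{\alpha!}(iv)^\alpha\prod_{j=1}^\nu\kappa\Bigl(\frac{\lambda_{\abs{\alpha}}v_j}{\jnorm{u}}\Bigr)
\]
(with $\lambda_k$ chosen from the $C_\alpha$) and the sentence ``One can now check that the properties hold,'' so you have in fact supplied considerably more detail than the paper, including the crucial telescoping identity for $\bar\partial_j$ of the Taylor polynomial. The only visible difference is cosmetic: you use a single radial cutoff $\chi(\mu_k v/\jnorm{u})$ while the paper uses a product of one-dimensional cutoffs; either choice works. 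One small point to tighten in your write-up: for the finitely many levels $k<\ell$ you need the \emph{lower} bound $\abs{v}\gtrsim\jnorm{u}/\mu_{k+1}$ coming from the telescoped cutoff difference $\chi_k-\chi_{k+1}$ (via Abel summation of $\sum_k\chi_k\bar\partial_jP_k$), not just the upper bound you invoke, but this is exactly the mechanism you already describe for the cutoff-derivative terms.
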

\begin{proof}
  We define a mapping $C^\infty(\R^\nu)\ni f\mapsto\tilde f\in
  C^\infty(\C^\nu)$ in the following way. Choose a function $\kappa\in
  C_0^\infty(\R)$ which equals $1$ in a neighbourhood of $0$ and put
  $\lambda_0=C_0$,
  $\lambda_k=\max\{\max_{\abs{\alpha}=k}C_\alpha,\lambda_{k-1}+1\}$
  for $k\ge1$. Writing $z=u+iv\in\R^\nu\oplus i\R^\nu$, we now define
  \begin{equation*}
    \tilde f(z)=\sum_\alpha\frac{\partial^\alpha
      f(u)}{\alpha!}(iu)^\alpha\prod_{j=1}^\nu\kappa\Bigl(\frac{\lambda_{\abs{\alpha}}v_j}{\jnorm{u}}\Bigr).
  \end{equation*}
  One can now check that the properties hold.
\end{proof}
\begin{remark}\label{rem:AA}
  Note that if we for a $\chi\in C_0^\infty(\R^\nu;[0,1])$ with
  $\chi(0)=1$ define a sequence of functions by
  $f_{k,\lambda}(x)=\chi(\frac{x}{k})f_\lambda(x)$, then
  \begin{equation*}
    [B,f_\lambda(A)]=\lim_{k\to\infty}[B,f_{k,\lambda}(A)]
  \end{equation*}
  as a form identity on $\cD(\jnorm{A}^s)$ and we have the dominated
  pointwise convergence
  \begin{equation*}
    \bar\partial \tilde f_{k,\lambda}(x)\to\bar\partial \tilde
    f_\lambda(x)\text{ for }k\to\infty.
  \end{equation*}
\end{remark}
Let $\{f_\lambda\}_{\lambda\in I}$ satisfy the assumption of
Proposition~\ref{prop:AA} with $s<0$. Then the almost analytic
extensions provide a functional calculus via the formula
\begin{align}
  f_\lambda(A)=C_{\nu}\sum_{\ell=1}^\nu\int_{\C^\nu}\bar\partial_\ell\tilde
  f_\lambda(z)(A_\ell-\bar z_\ell)\abs{A-z}^{-2\nu}dz,\label{eq:AAFC}
\end{align}
where $C_\nu$ is a positive constant (again we refer to \cite{DiSj}
for details). Note that the integrals are absolutely convergent by Proposition~\ref{prop:AA}\eqref{item:AA2}.

Multiplying $\jnorm{A}^{t_1}R_{\ell,n}(A,B)\jnorm{A}^{t_2}$ with
$\bar\partial\tilde f_\lambda(z)$, we get from \eqref{eq:Hadamard} and
Proposition~\ref{prop:AA}~\eqref{item:AA2} that
\begin{align}\label{eq:Rest}
  \norm{\jnorm{A}^{t_1}\bar\partial\tilde
    f_\lambda(z)R_{\ell,n}(A,B)\jnorm{A}^{t_2}}\le C\jnorm{z}^{t_1+t_2+s-n-1-2\nu}.
\end{align}
Hence, if $t_1+t_2+s<n+1$, $\jnorm{A}^{t_1}\bar\partial\tilde
f_\lambda(z)R_{\ell,n}(A,B)\jnorm{A}^{t_2}$ is integrable over
$\C^\nu$. Using \eqref{eq:AcommB}, \eqref{eq:AAFC} and
\eqref{eq:Rest}, we see that
\begin{align}
  [B,f_\lambda(A)]= {} & 
  C_{\nu}\sum_{\ell=1}^\nu\int_{\C^\nu}\bar\partial_\ell\tilde 
  f_\lambda(z)[B,(A_\ell-\bar z_\ell)\abs{A-z}^{-2\nu}]\,dz\nonumber\\
  ={} &C_\nu\sum_{\ell=1}^\nu\int_{\C^\nu}\bar\partial_\ell\tilde
  f_\lambda(z)\smashoperator{\sum_{\abs{\alpha}=1}^n}\frac{1}{\alpha!}\partial^\alpha
  \bigl(\abs{\,\cdot-z}^{-2\nu}(\,\cdot\,_\ell-\bar z_\ell)\bigr)
  (A)\,dz\,\ad{A}{\alpha}(B)\nonumber\\
  &+C_\nu\sum_{\ell=1}^\nu\int_{\C^\nu}\bar\partial_\ell\tilde
  f_\lambda(z)R_{\ell,n}(A,B)\,dz.\label{eq:Restled}
\end{align}
We denote \eqref{eq:Restled} by $R_{\lambda,n}(A,B)$.  Note that
\begin{align*}
  \MoveEqLeft \sum_{\ell=1}^\nu\int_{\C^\nu}\bar\partial_\ell\tilde
  f_\lambda(z)
  \frac{1}{\alpha!}\partial_t^\alpha
  \bigl(\abs{t-z}^{-2\nu}(t_\ell-\bar z_\ell)\bigr)
  \,dz\\
  &=
  \frac{1}{\alpha!}\partial_t^\alpha\sum_{\ell=1}^\nu\int_{\C^\nu}\bar\partial_\ell\tilde
  f_\lambda(z) \abs{t-z}^{-2\nu}(t_\ell-\bar z_\ell) \,dz
=
\frac{1}{\alpha!}\partial^\alpha
  f_\lambda(t),
\end{align*}
which implies 
\begin{align*}
  [B,f_\lambda(A)]&=\sum_{\abs{\alpha}=1}^n\frac{1}{\alpha!}\partial^\alpha
  f_\lambda(A)\,\ad{A}{\alpha}(B)+R_{\lambda,n}(A,B).
\end{align*}

We have now proved Theorem~\ref{thm} in the case $s<0$.
For the general case, we use Remark~\ref{rem:AA} to see that
$[B,f_\lambda(A)]=\lim_{k\to\infty}[B,f_{k,\lambda}(A)]$ and clearly,
$f_{k,\lambda}$ satisfies the assumption of Proposition~\ref{prop:AA}
with the same $s$, so the estimate corresponding to \eqref{eq:Rest} is
now uniform in $k$ and $\lambda$. The pointwise convergence and
Lebesgue's theorem on dominated convergence now finishes the argument.
\section*{Acknowledgements}
The author would like to thank J.~S. M\o{}ller for suggestions,
fruitful discussions and ultimately for proposing this problem. Part
of this work was done while participating in the Summer School on
Current Topics in Mathematical Phyics at the Erwin Schr\"o{}dinger
International Institute for Mathematical Physics (ESI) in Vienna.
\bibliographystyle{plain}
\bibliography{referencer}
\end{document}